\documentclass[12pt]{article}
\usepackage{mathtext,amssymb,amsmath}
\usepackage[T1,T2A]{fontenc}
\usepackage[cp1251]{inputenc}
\usepackage[english]{babel}
\usepackage{xcolor}
\textwidth     149mm   
\textheight    220mm   
\topmargin      -10mm  
\oddsidemargin  15mm    
\evensidemargin 15mm    
\linespread{1.05}

\parindent 10mm

\def\dom{\mathop{\rm dom}}

\def\epsilon{\varepsilon}
\def\phi{\varphi}

\def\dom{\mathop{\rm dom}}

\def\dom{\mathop{\rm dom}}

\def\ball{{I\kern -.35em B}}
\def\epsilon{\varepsilon}
\def\phi{\varphi}

\def\reals{{I\kern-.35em R}} \def\Reals{\overline{I\kern-.35em R}}

\def\qed{\hfill{$\vcenter{\hrule height1pt \hbox{\vrule width1pt height5pt
    \kern5pt \vrule width1pt} \hrule height1pt}$} \medskip}

\def\text#1{\,\;\hbox{#1}\;\,}

\newcommand{\R}{\mathbb{R}}

\newcommand{\beq}{\begin{equation}}
\newcommand{\eeq}{\end{equation}}

\newenvironment{proof}
         {\begin{trivlist}\item[
         {\bf Proof.}]}{{\hfill $\square$} \end{trivlist}}

\newtheorem{theo}{Theorem}[section]
\newtheorem{defi}[theo]{Definition}
\newtheorem{rem}[theo]{Remark}

\raggedbottom
\sloppy
\setcounter{page}{1}
\begin{document}
\title{\LARGE\bf Saddle points in completely regular topological spaces\thanks{The research of the first and the third named authors is financed by the European Union-NextGenerationEU, through the National Recovery and Resilience Plan of the Republic of Bulgaria, project  BG-RRP-2.004-0008-C01}}

\author{Detelina Kamburova\thanks{Institute of Mathematics and Informatics, Bulgarian Academy of Sciences, Acad. G. Bonchev Str., Block 8,
1113 Sofia, Bulgaria, e-mail: detelinak@math.bas.bg}, Rumen Marinov\thanks{Department of Mathematics and Physics, Technical University of Varna, 1, Studentska Str.,
9010 Varna, Bulgaria, e-mail: marinov\_r@yahoo.com}, Nadia Zlateva\thanks{Faculty of Mathematics and Informatics, Sofia University,   5, James Bourchier Blvd, 1164 Sofia, Bulgaria, e-mail: zlateva@fmi.uni-sofia.bg}}
\date{}

\maketitle

\begin{abstract}
We give a characterization of completely regular topological spaces. Applying some recent results for supinf problems in completely regular topological spaces we establish a variational principle for saddle points. Well-posedness of saddle point problems is studied as well. \\[0.2cm]
 \textsl{MSC}: 49K27, 90C47, 90C48\\
 \textsl{Key words}: variational principle, perturbed problems, minimax problems, saddle points, well-posed problems
\end{abstract}

\section{Introduction}

Variational principles concern sufficient conditions under which after a perturbation of the optimized function, the perturbed minimization problem has a solution. In 1972  Ekeland~\cite{Ekeland} proved a variational principle in complete metric spaces. This principle has been shown to be equivalent to the completeness of the metric space, see  \cite{Sullivan}, as well as to Caristi's fixed point theorem  and Takahashi's minimization principle, see \cite{Oettli_Thera}. McLinden \cite{McLinden} used the Ekeland variational principle to derive some results of minimax type in Banach spaces.

\smallskip

In 2010 Kenderov and Revalski~\cite{Kend_Rev_2010} proved a variational principle in completely regular topological spaces. Later, in \cite{Kend_Rev_2017}, they gave a sufficient condition for existence of a perturbation such that the perturbed problem is (Tykhonov) well-posed. Let us recall that the problem to minimize $f:X \rightarrow \R \cup \{+\infty\}$, where $X$ is a topological space, is called   well-posed if it has  unique solution $x_0 \in X$ and for every minimizing sequence $\{x_n\}_n \subset X$, $f(x_n) \rightarrow \inf_X f$ it holds that $x_n \rightarrow x_0$, i.e. $x_0$ is the strong minimum of $f$ on $X$. In the same paper they obtained a variational principle for supinf problems.

\smallskip

We prove that if the variational principle of Kenderov and Revalski holds for every proper lower semicontinuous bounded below function defined on a topological space then the space is completely regular (Theorem~\ref{characterization1}) and if the strong variational principle holds, then additionally the space satisfies the first axiom of countability (Theorem~\ref{characterization2}). A minimax variational principle in completely regular topological spaces is proved in Theorem~\ref{saddle_point}. In the final section we give a sufficient condition for existence of a perturbation such that the perturbed saddle point problem is well-posed. At the end, we give a characterization of well-posed perturbed problems.

\section{Characterization of completely regular topological spaces}
A topological space $X$ is said to be completely regular if it is Hausdorff and for every set $A \subset X$ and a point $x \in X \setminus \overline{A}$ (i.e. not belonging to the closure of $A$) there exists a continuous function $f: X \rightarrow \mathbb{R}$ such that $f(x) \notin \overline{f(A)}$, see  \cite{Cullen}. The following assertion is often used as a definition for completely regular topological spaces: a Hausdorff topological space $X$ is completely regular if for every point $x$ and a closed set $A$, such that $x \notin A$ there exists a continuous function $f: X \rightarrow [0,1]$ such that $f(x)=0$ and $f(A)=1$.
Let $X$ be a completely regular topological space and $f: X \rightarrow [-\infty, +\infty]$ be an extended real-valued function. The domain of $f$ is denoted by $\dom f$ and consists of all points in $X$ at which $f$ has a finite value. The function $f$ is  proper if its domain is not empty. Denote by $C(X)$ the space of all continuous and bounded real-valued functions defined on $X$. The space $C(X)$ equipped with the supremum norm $\|f\|_{\infty}:=\sup\{|f(x)|: x \in X\}$ is a real Banach space. We denote by $\mathbb R_{+}$ the set of all nonnegative real numbers. 

Let us recall the variational principle of Kenderov and Revalski:
\begin{theo}[\cite{Kend_Rev_2010}]\label{basic_lemma1}   Let $X$ be a completely regular topological space and $f: X \rightarrow \mathbb R \cup \{+\infty\}$ be a proper lower semicontinuous function bounded from below. Let $x_0 \in \dom f$ and $\varepsilon>0$ be such that $f(x_0)<\inf_{X} f + \varepsilon$. Then, there exists a continuous bounded function $h: X \rightarrow \mathbb R_+$, $h(x_0)=0$, $\|h\|_{\infty} < \varepsilon$ and the function $f+h$ attains its minimum in $X$ at $x_0$. Moreover, $h$ can be chosen such that $\|h\|_{\infty}=f(x_0)-\inf_{X}f$.
\end{theo}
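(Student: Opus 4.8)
The plan is to reduce the statement to a single continuous insertion problem and then solve that problem using complete regularity. Write $m=\inf_X f$ and $\delta=f(x_0)-m$, so that $0\le\delta<\varepsilon$ by hypothesis. For $f+h$ to attain its minimum at $x_0$ we need exactly $(f+h)(x)\ge(f+h)(x_0)=f(x_0)$ for all $x$, i.e. $h(x)\ge f(x_0)-f(x)$; together with $h\ge0$ this means $h\ge g$, where $g(x):=\max\{0,\,f(x_0)-f(x)\}$. Since $f$ is lower semicontinuous, $g$ is upper semicontinuous; moreover $0\le g\le\delta$, $g(x_0)=0$, and $\sup_X g=f(x_0)-\inf_X f=\delta$. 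Thus it suffices to find a \emph{continuous} $h:X\to[0,\delta]$ with $g\le h\le\delta$ and $h(x_0)=0$: indeed then $(f+h)(x)\ge f(x)+g(x)=\max\{f(x),f(x_0)\}\ge f(x_0)$ with equality at $x_0$, while $g\le h\le\delta$ and $\sup_X g=\delta$ force $\|h\|_\infty=\delta=f(x_0)-\inf_X f<\varepsilon$, as required. (If $\delta=0$ then $g\equiv0$ and $h\equiv0$ works.)

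The crux is therefore the following insertion fact, which is where complete regularity enters: an upper semicontinuous $g:X\to[0,M]$ with $g(x_0)=0$ can be dominated from above by a continuous function that still vanishes at $x_0$. First I would prove a finite, approximate version. Fix $n$, put $s_i=iM/n$ and consider the super-level sets $A_i=\{x:g(x)\ge s_i\}$, which are closed (upper semicontinuity) and, for $i\ge1$, do not contain $x_0$. Complete regularity, in the separation form recalled above, yields continuous $\varphi_i:X\to[0,1]$ with $\varphi_i(x_0)=0$ and $\varphi_i\equiv1$ on $A_i$. Then $h_n:=\frac{M}{n}\sum_{i=1}^n\varphi_i$ is continuous, $0\le h_n\le M$, $h_n(x_0)=0$, and a count of the indices $i$ with $s_i\le g(x)$ gives $h_n\ge g-M/n$ pointwise.

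To upgrade this to exact domination I would iterate, forcing uniform convergence by geometric control of the residuals. Set $g_0=g$ and, given the upper semicontinuous residual $g_{k-1}\ge0$ with $g_{k-1}(x_0)=0$, apply the finite construction with $M=\|g_{k-1}\|_\infty$ and $n=2$ to obtain a continuous $h^{(k)}$ with $0\le h^{(k)}\le\|g_{k-1}\|_\infty$, $h^{(k)}(x_0)=0$, and $h^{(k)}\ge g_{k-1}-\tfrac12\|g_{k-1}\|_\infty$. Put $g_k:=\max\{0,\,g_{k-1}-h^{(k)}\}$, again upper semicontinuous, nonnegative, vanishing at $x_0$, with $\|g_k\|_\infty\le\tfrac12\|g_{k-1}\|_\infty$, so $\|g_k\|_\infty\le\delta\,2^{-k}$ and $\|h^{(k)}\|_\infty\le\delta\,2^{-(k-1)}$. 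Hence $S:=\sum_{k\ge1}h^{(k)}$ converges uniformly and is continuous, $S(x_0)=0$, and telescoping $g_{k-1}\le h^{(k)}+g_k$ (with $\|g_K\|_\infty\to0$) gives $g=g_0\le S$. Finally $h:=\min\{\delta,S\}$ is continuous, satisfies $g\le h\le\delta$ and $h(x_0)=0$, and by the reduction it is the desired perturbation.

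The main obstacle is precisely the insertion step: in a merely completely regular space one has only separation of a point from a closed set, not the Kat\v{e}tov--Tong insertion available in normal spaces, so a single continuous dominating function cannot be written down in one stroke. The device that overcomes this is the iterated finite bump-sum with geometrically decaying residuals, whose uniform convergence is what keeps the limit continuous; the final truncation at $\delta$ is what simultaneously secures $h\le\delta$ and the sharp norm identity $\|h\|_\infty=f(x_0)-\inf_X f$.
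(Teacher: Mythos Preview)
The paper does not contain its own proof of this theorem: it is merely recalled from \cite{Kend_Rev_2010} and stated without argument, so there is no in-paper proof to compare against.

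That said, your proof is correct. The reduction to inserting a continuous function between the upper semicontinuous $g(x)=\max\{0,f(x_0)-f(x)\}$ and the constant $\delta$, with the additional constraint $h(x_0)=0$, is the right reformulation, and your observation that $\sup_X g=\delta$ is exactly what forces the sharp norm identity $\|h\|_\infty=f(x_0)-\inf_X f$ after the final truncation $h=\min\{\delta,S\}$. The iterative bump-sum with halving residuals is a clean way to obtain the dominating continuous function using only point--closed-set separation; each step is justified (closedness of the superlevel sets $A_i$ from upper semicontinuity, $x_0\notin A_i$ for $i\ge1$ since $g(x_0)=0$, preservation of upper semicontinuity and of the value $0$ at $x_0$ under $g_k=\max\{0,g_{k-1}-h^{(k)}\}$, and the telescoping bound $g_0\le\sum_{k=1}^K h^{(k)}+g_K$ with $\|g_K\|_\infty\to0$). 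The uniform convergence of $S=\sum h^{(k)}$ follows from $\|h^{(k)}\|_\infty\le\delta\,2^{-(k-1)}$, and the degenerate case $\delta=0$ is handled separately by $h\equiv0$.

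One cosmetic remark: in your finite step you should note that if some $A_i$ is empty the separating function can be taken to be $\varphi_i\equiv0$, so complete regularity is invoked only when there is actually something to separate; this is implicit but worth stating.
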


Assuming that the variational principle of Kenderov and Revalski holds in a Hausdorff topological space $X$, we prove that $X$ is completely regular.

\begin{theo}\label{characterization1}
Let $X$ be a Hausdorff topological space. If for every function $f: X \rightarrow \mathbb R \cup \{+\infty\}$ which is proper lower semicontinuous and bounded from below and for every $x_0 \in \dom f$, there exists a continuous bounded function $h: X \rightarrow \mathbb R_+$, $h(x_0)=0$, $\|h\|_{\infty} = f(x_0) - \inf_X f$ such that the function $f+h$ attains its minimum on $X$ at $x_0$, then $X$ is a completely regular topological space.
\end{theo}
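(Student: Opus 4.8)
The plan is to verify the classical separation property directly. Since $X$ is assumed to be Hausdorff, it will be completely regular as soon as I can show that for every nonempty closed set $A\subset X$ and every $x_0\in X\setminus A$ there is a continuous $g:X\rightarrow[0,1]$ with $g(x_0)=0$ and $g\equiv 1$ on $A$. My strategy is to manufacture exactly such a $g$ as the perturbation $h$ delivered by the hypothesis, applied to a carefully chosen lower semicontinuous test function. The whole argument hinges on exploiting the mandatory norm equality $\|h\|_\infty=f(x_0)-\inf_X f$.

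First I would fix a nonempty closed set $A$ and a point $x_0\notin A$, and define
\[
f(y)=\begin{cases} 1, & y=x_0,\\ 0, & y\in A,\\ +\infty, & \text{otherwise.}\end{cases}
\]
Because $X$ is Hausdorff, hence $T_1$, singletons are closed, so the sublevel sets $\{f\le c\}$ equal $\emptyset$, $A$, or $A\cup\{x_0\}$ according as $c<0$, $0\le c<1$, or $c\ge 1$; each of these is closed, so $f$ is lower semicontinuous. It is plainly proper and bounded from below, with $\inf_X f=0$ and $f(x_0)=1$, and $x_0\in\dom f$. Applying the assumed principle at $x_0$ then produces a continuous bounded $h:X\rightarrow\R_+$ with $h(x_0)=0$, with $\|h\|_\infty=f(x_0)-\inf_X f=1$, and such that $f+h$ attains its minimum on $X$ at $x_0$.

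To finish, I would read off the separation from the minimality of $x_0$. The minimal value is $(f+h)(x_0)=1$, so for every $y\in A$ we get $h(y)=(f+h)(y)\ge 1$; on the other hand $0\le h\le\|h\|_\infty=1$ everywhere. Hence $h(A)=\{1\}$ and $h(x_0)=0$, while $h$ takes values in $[0,1]$, so $g:=h$ is precisely the function witnessing complete regularity.

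The step I expect to be the only real subtlety—and the main obstacle—is choosing the value assignment in $f$ correctly. The assignment must \emph{not} be reversed: if one instead set $f(x_0)=0$ and $f\equiv 1$ on $A$, then $x_0$ would already minimize $f$, forcing $\|h\|_\infty=f(x_0)-\inf_X f=0$ and hence $h\equiv 0$, which separates nothing. Putting the closed set $A$ at the infimum and the point $x_0$ strictly above it is exactly what makes the enforced norm equality do all the work. A minor loose end to dispatch is the degenerate case $A=\emptyset$, for which the separation requirement is vacuous.
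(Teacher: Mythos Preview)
Your proof is correct and follows essentially the same approach as the paper: choose a test function that vanishes on $A$ and equals $1$ at $x_0$, then use the forced norm equality $\|h\|_\infty=1$ together with the minimality inequality $h(y)\ge 1$ for $y\in A$ to conclude $h\equiv 1$ on $A$. The only cosmetic difference is that the paper takes $f=0$ on $A$ and $f=1$ on all of $X\setminus A$ (rather than $+\infty$ off $A\cup\{x_0\}$), which makes lower semicontinuity immediate without invoking that singletons are closed.
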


\begin{proof} Let $x_0 \in X$, $A$ be a closed subset of $X$, $x_0 \notin A$ and let
\[f(x):=\left\{\begin{array}{ll}
0, & \text{if } x \in A,\\
1, & \text{otherwise.}\\
\end{array}\right.\]
Since $f$ is a proper lower semicontinuous function bounded from below then, by assumption there exists $h: X \rightarrow \mathbb R_+$, $h(x_0)=0$, $\|h\|_{\infty} = 1$ such that the function $f+h$ attains its minimum in $X$ at $x_0$. Let $x \in A$ be arbitrary. Then
\[f(x)+h(x)=h(x) \geq f(x_0)+h(x_0)=f(x_0)=1,\]
so $h(x) \geq 1$. As $\|h\|_{\infty} \leq 1$, therefore $h(A) \equiv 1$ and $X$ is a completely regular topological space.
\end{proof}

\begin{rem} From the proof of Theorem \ref{characterization1} it is clear that it is sufficient the variational principle to hold only for characteristic functions of closed sets in the Hausdorff space $X$ to ensure that $X$ is completely regular.
\end{rem}

Let us recall now the strong variational principle of Kenderov and Revalski.

\begin{theo}[\cite{Kend_Rev_2017}]\label{basic_lemma2}  
Let $X$ be a completely regular space and $f: X \rightarrow \mathbb R \cup \{+\infty\}$ be a proper lower semicontinuous function bounded from below. Let $x_0 \in \dom f$ has a countable local base in $X$. Let $\varepsilon>0$ be arbitrary. Then, there exist a continuous bounded function
\[h: X \rightarrow \mathbb R_+, \;  h(x_0)=0, \; \|h\|_{\infty} < f(x_0)-\inf_{X}f+\varepsilon,\]
such that the function $f+h$ attains its strong minimum on $X$ at $x_0$.
\end{theo}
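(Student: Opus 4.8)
The plan is to derive the strong principle by bootstrapping the (non-strong) one of Theorem~\ref{basic_lemma1}: first produce a perturbation $h_0$ that merely makes $x_0$ a global minimizer, and then superimpose a second, small perturbation, assembled from the countable local base and from complete regularity, whose only job is to force every minimizing sequence to converge to $x_0$. Concretely, since $x_0$ has a countable local base I would first fix a \emph{decreasing} sequence $V_1 \supseteq V_2 \supseteq \cdots$ of open neighbourhoods of $x_0$ forming a local base (replace a given countable base by finite intersections of interiors). For each $n$ the set $X \setminus V_n$ is closed and omits $x_0$, so complete regularity, in the separation form recalled just above, yields a continuous $g_n : X \to \R_+$ with $g_n(x_0)=0$, $0 \le g_n \le 1$, and $g_n \equiv 1$ on $X \setminus V_n$. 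The point of the decreasing choice is the monotonicity $x \notin V_n \Rightarrow g_m(x)=1$ for all $m \ge n$.

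Next I would apply Theorem~\ref{basic_lemma1} to $f$ at $x_0$ to obtain a continuous bounded $h_0 : X \to \R_+$ with $h_0(x_0)=0$, $\|h_0\|_\infty = f(x_0)-\inf_X f$, and $f+h_0$ attaining its minimum at $x_0$; write $m_0 := (f+h_0)(x_0) = f(x_0)$, so that $f+h_0 \ge m_0$ on $X$. Then, choosing any $c_n > 0$ with $\sum_n c_n < \varepsilon$, I would set
\[
h := h_0 + \sum_{n=1}^{\infty} c_n g_n .
\]
Because $0 \le g_n \le 1$ and $\sum_n c_n < \infty$, the series converges uniformly, so $h$ is continuous, bounded, nonnegative, and $h(x_0)=0$; moreover $\|h\|_\infty \le \|h_0\|_\infty + \sum_n c_n < f(x_0) - \inf_X f + \varepsilon$, which is the required norm bound. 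Since $\sum_n c_n g_n \ge 0$ and vanishes at $x_0$, the function $f+h$ still satisfies $f+h \ge f+h_0 \ge m_0$ with equality at $x_0$, so $x_0$ remains a global minimizer and $\inf_X(f+h)=m_0$.

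It then remains to upgrade ``minimizer'' to ``strong minimizer'', which is where the construction pays off. For $x \notin V_n$ one has $g_m(x)=1$ for all $m \ge n$, hence
\[
(f+h)(x) = (f+h_0)(x) + \sum_{m} c_m g_m(x) \ge m_0 + \sum_{m \ge n} c_m ,
\]
so $f+h$ exceeds its minimum value $m_0$ by at least the fixed positive amount $\delta_n := \sum_{m\ge n} c_m$ everywhere outside $V_n$. Uniqueness is then immediate: any $x \ne x_0$ lies outside some $V_n$ (the $V_n$ form a local base in a Hausdorff space), so $(f+h)(x) \ge m_0 + \delta_n > m_0$. For a minimizing sequence $\{x_k\}$, i.e. $(f+h)(x_k) \to m_0$, suppose $x_k \not\to x_0$; then some neighbourhood of $x_0$, hence some $V_n$, is avoided by a subsequence, and that subsequence would satisfy $(f+h)(x_{k_j}) \ge m_0 + \delta_n$, contradicting convergence to $m_0$. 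Thus $x_k \to x_0$ and $x_0$ is the strong minimum. I expect the real content to lie not in any single hard estimate but in the design of the perturbation: the weights $c_n$ must be summable (to keep $h$ continuous and within the bound $<\varepsilon$) yet leave every tail $\delta_n=\sum_{m\ge n}c_m$ strictly positive (to place a genuine gap on each shell $X\setminus V_n$). This is exactly where first countability is indispensable, since it makes the separating family $\{g_n\}$ countable, hence summable into a single continuous function, and lets the strong-minimum property be verified along sequences.
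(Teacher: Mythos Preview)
The paper does not supply its own proof of this theorem; it is quoted from \cite{Kend_Rev_2017}. Your argument is correct and is precisely the construction the paper itself deploys later in the proof of Theorem~\ref{saddle_point_wp} (which in turn follows Proposition~2.10 of \cite{Kend_Rev_2017}): from a nested countable local base $\{V_n\}$ produce, via complete regularity, separating functions $g_n:X\to[0,1]$ with $g_n(x_0)=0$ and $g_n\equiv 1$ on $X\setminus V_n$, and then add to a first-stage perturbation the uniformly convergent series $\sum_n c_n g_n$ with summable positive weights (the paper uses $c_n=\delta/2^n$). The tail estimate $\sum_{m\ge n}c_m>0$ on each shell $X\setminus V_n$ is exactly what forces every minimizing sequence into each $V_n$, hence to $x_0$.
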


Assuming that the strong variational principle of Kenderov and Revalski holds in a Hausdorff topological space $X$, we will prove that $X$ is a completely regular topological space that satisfies the first axiom of countability.

\begin{theo} \label{characterization2} 
Let $X$ be a Hausdorff topological space. If for every function $f: X \rightarrow \mathbb R \cup \{+\infty\}$ which is proper lower semicontinuous and bounded from below, for every $x_0 \in \dom f$ and for every $\varepsilon>0$, there exists a continuous bounded function
\[h: X \rightarrow \mathbb R_+, \;  h(x_0)=0, \; \|h\|_{\infty} < f(x_0)-\inf_{X}f+\varepsilon,\]
such that the function $f+h$ attains its strong minimum on $X$ at $x_0$, then $X$ is a completely regular topological space that satisfies the first axiom of countability.
\end{theo}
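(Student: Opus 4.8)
The plan is to establish the two conclusions separately: first that $X$ is completely regular, and then that every point of $X$ has a countable local base. The complete regularity part will be a near-repetition of the argument for Theorem~\ref{characterization1}, while the first axiom of countability is where the strong-minimum (well-posedness) hypothesis really does work.

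For complete regularity I would argue almost exactly as in the proof of Theorem~\ref{characterization1}. Fix $x_0 \in X$ and a closed set $A$ with $x_0 \notin A$, and take the function $f$ equal to $0$ on $A$ and $1$ elsewhere; this $f$ is proper, lower semicontinuous and bounded below, with $\inf_X f = 0$ and $f(x_0) = 1$. Applying the hypothesis (with, say, $\varepsilon = 1$) produces a continuous bounded $h \geq 0$ with $h(x_0) = 0$ such that $f + h$ attains its minimum at $x_0$. For every $x \in A$ one has $h(x) = f(x) + h(x) \geq f(x_0) + h(x_0) = 1$, so $h \geq 1$ on $A$ while $h(x_0) = 0$. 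Then $g := \min\{h, 1\}$ is a continuous function $X \to [0,1]$ with $g(x_0) = 0$ and $g \equiv 1$ on $A$, which is the desired separation. Here the weaker norm bound $\|h\|_{\infty} < f(x_0) - \inf_X f + \varepsilon$ causes no trouble, since only the fact that $x_0$ minimizes $f+h$ is used.

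The substantive part is first countability, and this is where the strong-minimum hypothesis must be exploited. Fix an arbitrary point $x_0 \in X$ and apply the hypothesis to the constant function $f \equiv 0$ (which is proper, continuous and has $\inf_X f = f(x_0) = 0$) with any fixed $\varepsilon > 0$. This yields a continuous bounded $h: X \rightarrow \mathbb R_{+}$ with $h(x_0) = 0$ such that $f + h = h$ attains its strong minimum at $x_0$; in particular $\inf_X h = 0$ is attained only at $x_0$, and every sequence $\{x_n\}_n$ with $h(x_n) \to 0$ converges to $x_0$. I would then set
\[ U_n := \{\, x \in X : h(x) < 1/n \,\}, \qquad n \in \mathbb N, \]
each of which is open (by continuity of $h$) and contains $x_0$, and claim that $\{U_n\}_n$ is a countable local base at $x_0$.

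To verify the claim, suppose some open $V \ni x_0$ contains no $U_n$. Then for each $n$ one may pick $x_n \in U_n \setminus V$, so that $h(x_n) < 1/n \to 0$. By well-posedness $x_n \to x_0$, hence eventually $x_n \in V$, contradicting $x_n \notin V$. Thus every neighbourhood of $x_0$ contains some $U_n$, so $\{U_n\}_n$ is a countable local base at $x_0$; since $x_0$ was arbitrary, $X$ satisfies the first axiom of countability. I expect the only delicate point to be the recognition that the \emph{sequential} strong-minimum condition is precisely what converts the sublevel sets of $h$ into a genuine local base — everything else is routine, and the norm estimate on $h$ plays no role in this part at all.
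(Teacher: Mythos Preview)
Your proposal is correct and follows essentially the same approach as the paper: both parts match, with the first-countability argument via the sublevel sets $U_n=\{h<1/n\}$ of the perturbation of the zero function being identical. The only cosmetic difference is in the complete-regularity step: the paper exploits the \emph{strong} minimum to get the strict inequality $h>1$ on $\overline A$ and then invokes the ``$x_0\notin\overline{f(A)}$'' form of the definition, whereas you use only the weak minimum, obtain $h\ge 1$ on $A$, and truncate via $\min\{h,1\}$ to land on the ``$f(x_0)=0$, $f(A)=1$'' form --- both are perfectly valid.
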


\begin{proof}
Let $x_0 \in X$, $A$ be a subset of $X$, $x_0 \notin \overline{A}$ and let
\[f(x):=\left\{\begin{array}{ll}
0, & \text{if } x \in \overline{A},\\
1, & \text{otherwise.}\\
\end{array}\right.\]
Since $f$ is a proper lower semicontinuous function bounded from below then, by assumption, there exists $h: X \rightarrow \mathbb R_+$, $h(x_0)=0$, such that the function $f+h$ attains its strong minimum on $X$ at $x_0$. Let $x \in \overline{A}$ be arbitrary. Then
\[h(x)= f(x)+h(x) > f(x_0)+h(x_0)=f(x_0)=1,\]
so $h(x) > 1$ for all $x \in \overline{A}$. Therefore, $\overline{h(\overline{A})}\ge 1$. Since $h(x_0)=0$, $h(x_0) \notin \overline{h(\overline{A})}\supset \overline{h( A)}$ and $X$ is a completely regular topological space.

\smallskip

Now we will consider the constant function  $f(x)=0$ for all $x \in X$ and arbitrary fixed $x_0\in X$. By assumption there exists a continuous bounded function $h$ which attains its strong minimum at $x_0$ and $\min_X h=h(x_0)=0$. Consider the sets
\[L_n:=\{x \in X: h(x)<1/n\}, \; n \geq 1.\]
We will prove that the sets $\{L_n,\ n \geq 1\}$ form a countable local base at $x_0$. First, observe that $\displaystyle{\{x_0\} = \cap_{n=1}^{\infty} L_n}$. This follows from the assumption that $x_0$ is the strong minimum of $h$ and every minimizing sequence converges to $x_0$. Then, take an arbitrary neighbourhood $U$ of $x_0$. To the contrary, suppose that for each $n$ there exists a point $x_n \in L_n$, $x_n \not\in U$. But $\{x_n\}_n$ is a minimizing sequence and its limit point is $x_0$, so there exists $n_0$ such that $x_n \in U$ for all $n \geq n_0$, which yields a contradiction. Therefore, $X$ is a completely regular topological space with countable local base at each point.
\end{proof}

\section{Variational principle for saddle points}
Let $X$ and $Y$ be topological spaces and $f: X \times Y \rightarrow [-\infty, +\infty]$ be an extended real-valued function. Solution to the supinf problem
$$
\sup_{x \in X} \inf_{y \in Y} f(x,y),
$$
is called any point $(x_0,y_0)$ such that
\[
f(x_0,y_0)=\inf_{y\in Y} f(x_0,y)=\sup_{x\in X}\inf_{y\in Y} f(x,y),
\]
see e.g. \cite{Kend_Rev_2017}. Analogously,  solution to the infsup problem
$$
\inf_{y \in Y} \sup_{x \in X} f(x,y),
$$
is called any point $(x_0,y_0)$ such that
\[
f(x_0,y_0)=\sup_{x\in X} f(x,y_0)=\inf_{y\in Y}\sup_{x\in X} f(x,y).
\]

For a given function $f:X \times Y \rightarrow [-\infty, +\infty]$, denote
\begin{equation}\label{v}
v_f(x) := \inf_{y \in Y} f(x,y),
\end{equation}
and by $V_f$ denote the optimal value of the supinf problem, i.e. $V_f:= \sup _{x\in X}v_f(x)$. Also, denote
\begin{equation}\label{w}
w_f(y) := \sup_{x \in X}  f(x,y),
\end{equation}
and by $W_f$ denote the optimal value of the infsup problem, i.e. $W_f:= \inf _{y\in Y}w_f(x)$.

Let 
\[
\Delta_f:= W_f-V_f=\inf_{y \in Y} \sup_{x \in X}f(x,y)-\sup_{x \in X} \inf_{y \in Y}f(x,y).\]
It is clear that $\Delta_f\ge 0$.

A point $(x_0,y_0)$ is said to be a saddle point of $f$ on $X \times Y$ if
\[f(x,y_0) \leq f(x_0,y_0) \leq f(x_0,y), \forall x \in X, \forall y \in Y,\]
which is equivalent to
\[V_f=v_f(x_0)= f(x_0,y_0)=w_f(y_0)=W_f.\]
Obviously, if $(x_0,y_0)$ is a saddle point of $f$ on $X \times Y$, then  $(x_0,y_0)$ is a solution to both infsup and supinf problems for $f$, they have the same optimal values equal to $f(x_0,y_0)$, and $\Delta_f=0$. 

Note that it is possible that $\Delta_f=0$ but the function $f$ to have no saddle point on $X \times Y$. For example, consider the function $f(x,y):=x-y$ defined on $X \times Y$, where $X:=(0,1)$, $Y:=(0,1]$. It is easy to check that $\Delta_f=0$ but $f$ has no saddle point on $X \times Y$ since $(1,1) \notin X \times Y$.

We will make the following assumptions for the function $f:X \times Y \rightarrow [-\infty, +\infty]$:
\begin{itemize}
 \item[{\rm (A1)}] for any $y \in Y$ the function $f(\cdot, y)$ is upper semicontinuous;
 \item[{\rm (A2)}] the function $v_f(x)$ is bounded above in $X$ and proper as a function with values in $\R \cup \{-\infty\}$;
 \item[{\rm (A3)}] for any $x \in X$ the function $f(x,\cdot)$ is lower semicontinuous;
 \item[{\rm (A4})] the function $w_f(y)$ is bounded below in $Y$ and proper as a function with values in $\R \cup \{+\infty\}$.
\end{itemize}

Under the assumptions $(A1)$-$(A4)$, $\Delta_f$ is  finite. 

Let us recall the supinf variational principle of Kenderov and Revalski.
\begin{theo}[\cite{Kend_Rev_2017}] \label{supinf}  Let $X$ and $Y$ be completely regular topological spaces and $f:X \times Y \rightarrow [-\infty, +\infty]$ be an extended real-valued function which satisfies the assumptions $(A1)$-$(A2)$. Let $\varepsilon>0$ and $x_0 \in X$ be such that $v_f(x_0)>\sup_{x \in X} v_f(x) - \varepsilon$, and let $\delta>0$ and $y_0 \in Y$ be such that $f(x_0,y_0)<\inf_{y \in Y} f(x_0,y)+\delta$. Then, there exist continuous bounded functions $q: X \rightarrow \mathbb R_{+}$ and $p: Y \rightarrow \mathbb R_{+}$, such that $q(x_0)=p(y_0)=0$, $\|q\|_{\infty} < \varepsilon$, $\|p\|_{\infty} < \delta$ and the supinf problem
\begin{equation}\label{N1}
\sup_{x \in X} \inf_{y \in Y} \{f(x,y)-q(x)+p(y)\}
\end{equation}
 has a solution at $(x_0,y_0)$.
\end{theo}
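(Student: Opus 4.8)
The plan is to reduce this two-sided problem to two successive applications of the basic variational principle (Theorem~\ref{basic_lemma1}): I would perturb the inner infimum first, the outer supremum second, and then verify that the two perturbations combine into a genuine supinf solution at $(x_0,y_0)$ with the required norm bounds. Throughout, write $V_f=\sup_{x\in X}v_f(x)$, which is finite by (A2), and set $\eta:=f(x_0,y_0)-v_f(x_0)\ge 0$, so that $\eta<\delta$ by hypothesis.

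First I would treat the inner problem at the fixed point $x_0$. Note that $x_0\in\dom v_f$ (from $v_f(x_0)>V_f-\varepsilon>-\infty$), and that the slice $f(x_0,\cdot)\colon Y\to\R\cup\{+\infty\}$ is proper (finite at $y_0$), bounded below by $v_f(x_0)$, and lower semicontinuous in $y$. Since $f(x_0,y_0)<\inf_Y f(x_0,\cdot)+\delta$, Theorem~\ref{basic_lemma1} applied on $Y$ yields a continuous bounded $p\colon Y\to\R_+$ with $p(y_0)=0$, $\|p\|_\infty=\eta<\delta$, such that $f(x_0,\cdot)+p$ attains its minimum at $y_0$; equivalently $\inf_{y\in Y}\{f(x_0,y)+p(y)\}=f(x_0,y_0)$.

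Next I would form the perturbed value function $\phi(x):=\inf_{y\in Y}\{f(x,y)+p(y)\}$ and record three facts: $\phi(x_0)=f(x_0,y_0)$; the estimate $v_f(x)\le\phi(x)\le v_f(x)+\|p\|_\infty$ for all $x$ (so $\phi$ is bounded above by $V_f+\eta$); and that $\phi$ is upper semicontinuous, being an infimum over $y$ of the functions $x\mapsto f(x,y)+p(y)$, each upper semicontinuous by (A1). Hence $-\phi\colon X\to\R\cup\{+\infty\}$ is proper, lower semicontinuous and bounded below, and the crucial tolerance estimate
\[
\sup_X\phi-\phi(x_0)\le(V_f+\eta)-(v_f(x_0)+\eta)=V_f-v_f(x_0)<\varepsilon
\]
holds. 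Applying Theorem~\ref{basic_lemma1} to $-\phi$ on $X$ (with the ``moreover'' clause) produces a continuous bounded $q\colon X\to\R_+$ with $q(x_0)=0$, $\|q\|_\infty=\sup_X\phi-\phi(x_0)<\varepsilon$, such that $-\phi+q$ attains its minimum at $x_0$, i.e. $\phi(x)-q(x)\le\phi(x_0)$ for all $x$, with equality at $x_0$.

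Finally I would assemble $g(x,y):=f(x,y)-q(x)+p(y)$, whose inner value function is exactly $v_g(x)=\phi(x)-q(x)$. Then $\inf_{y\in Y}g(x_0,y)=\phi(x_0)=f(x_0,y_0)=g(x_0,y_0)$ shows the inner infimum is attained at $y_0$, while $\sup_{x\in X}v_g(x)=\phi(x_0)-q(x_0)=f(x_0,y_0)=v_g(x_0)$ by the outer step; hence $(x_0,y_0)$ solves~\eqref{N1}. I expect the main obstacle to be precisely the bookkeeping in the tolerance estimate: perturbing the inner infimum raises the value function by as much as $\|p\|_\infty=\eta$, which a priori threatens the bound $\|q\|_\infty<\varepsilon$. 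What makes it work is that this upward shift by $\eta$ appears both at the reference point, $\phi(x_0)=v_f(x_0)+\eta$, and uniformly in $\sup_X\phi\le V_f+\eta$, so $\eta$ cancels in $\sup_X\phi-\phi(x_0)\le V_f-v_f(x_0)$; performing the inner perturbation before the outer one is exactly what makes this cancellation available. I would also flag that the inner step genuinely uses lower semicontinuity of $f(x_0,\cdot)$ in $y$ (condition (A3)): a continuous nonnegative $p$ vanishing at $y_0$ can dominate $f(x_0,y_0)-f(x_0,\cdot)$ only when $f(x_0,\cdot)$ is lower semicontinuous at $y_0$, so this hypothesis cannot be dispensed with.
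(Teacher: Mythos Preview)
The paper does not supply its own proof of this theorem; it is quoted from \cite{Kend_Rev_2017}. Your argument is correct and is exactly the natural two-step reduction to Theorem~\ref{basic_lemma1}: perturb one variable, then the other, using the ``moreover'' clause $\|p\|_\infty=\eta$ so that the shift by $\eta$ cancels in the outer tolerance estimate. The more common presentation runs the two steps in the opposite order (first perturb the outer problem for $v_f$, then the inner problem for $f(x_0,\cdot)$, and verify at the end that $v_g(x)\le v_f(x)+\eta-q(x)\le v_f(x_0)+\eta=g(x_0,y_0)$), but the computation is the same and your order works just as well.

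Your closing remark about (A3) is on target and worth keeping. The statement as recorded in the paper lists only (A1)--(A2), but the inner step genuinely needs lower semicontinuity of $f(x_0,\cdot)$ at $y_0$: if $p$ is continuous with $p(y_0)=0$ and $f(x_0,\cdot)+p$ attains its minimum at $y_0$, then for any net $y_\alpha\to y_0$ one has $f(x_0,y_\alpha)+p(y_\alpha)\ge f(x_0,y_0)$, hence $\liminf_\alpha f(x_0,y_\alpha)\ge f(x_0,y_0)$. Without this, the conclusion can fail (take $Y=\mathbb R$, $f(x,y)=1$ for $y=0$ and $0$ otherwise, $y_0=0$). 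So either (A3), or at least lower semicontinuity of the slice at $y_0$, should be read as part of the hypotheses.
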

Note that assuming (A3) the function $w_f(y)$ defined by (\ref{w}) is lower semicontinuous, so one can follow the lines of the proof of the above result in \cite{Kend_Rev_2017} to obtain the following

\begin{theo}\label{infsup}
Let $X$ and $Y$ be completely regular topological spaces and $f:X \times Y \rightarrow [-\infty, +\infty]$ be an extended real-valued function which satisfies the assumptions $(A3)$-$(A4)$. Let $\varepsilon>0$ and $y_0 \in Y$ be such that $w_f(y_0)<\inf_{y \in Y} w_f(y) + \varepsilon$, and let $\delta>0$ and $x_0 \in X$ be such that $f(x_0,y_0)>\sup_{x \in X} f(x,y_0)-\delta$. Then, there exist continuous bounded functions $h: X \rightarrow \mathbb R_{+}$ and $g: Y \rightarrow \mathbb R_{+}$, such that $h(x_0)=g(y_0)=0$, $\|h\|_{\infty} < \delta$, $\|g\|_{\infty} < \varepsilon$ and the infsup problem
\begin{equation}\label{N2}
\inf_{y \in Y} \sup_{x \in X} \{f(x,y)-h(x)+g(y)\}
\end{equation}
 has a solution at $(x_0,y_0)$.
\end{theo}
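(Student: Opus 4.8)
The plan is to derive Theorem~\ref{infsup} from Theorem~\ref{supinf} by a symmetrizing change of variables, rather than reproducing the whole argument of \cite{Kend_Rev_2017}. On the product $Y \times X$ I would define
\[
\tilde f(y,x) := -f(x,y).
\]
The guiding idea is that the infsup problem for $f$ over $X \times Y$ becomes, up to a global sign, the supinf problem for $\tilde f$ over $Y \times X$ with the two arguments interchanged; so the infsup principle for $f$ should be exactly the supinf principle for $\tilde f$.

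First I would verify that the hypotheses transfer. Since $\tilde f(\cdot,x) = -f(x,\cdot)$ and $f(x,\cdot)$ is lower semicontinuous by (A3), the slice $\tilde f(\cdot,x)$ is upper semicontinuous, which is (A1) for $\tilde f$. Next, $v_{\tilde f}(y) = \inf_{x} \tilde f(y,x) = -\sup_x f(x,y) = -w_f(y)$, so the boundedness below and properness of $w_f$ in (A4) become precisely the boundedness above and properness of $v_{\tilde f}$ in (A2). Thus $\tilde f$ on $Y \times X$ satisfies (A1)--(A2). The two point conditions transfer just as cleanly: $w_f(y_0) < \inf_Y w_f + \varepsilon$ reads $v_{\tilde f}(y_0) > \sup_Y v_{\tilde f} - \varepsilon$, and $f(x_0,y_0) > \sup_X f(\cdot,y_0) - \delta$ reads $\tilde f(y_0,x_0) < \inf_X \tilde f(y_0,\cdot) + \delta$. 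These are exactly the hypotheses of Theorem~\ref{supinf} for $\tilde f$ at the point $(y_0,x_0)$, with $\varepsilon$ tied to the first variable $y$ and $\delta$ tied to the second variable $x$.

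Applying Theorem~\ref{supinf} then yields continuous bounded functions $q: Y \to \R_+$ and $p: X \to \R_+$ with $q(y_0) = p(x_0) = 0$, $\|q\|_\infty < \varepsilon$, $\|p\|_\infty < \delta$, such that $(y_0,x_0)$ solves $\sup_{y} \inf_{x} \{\tilde f(y,x) - q(y) + p(x)\}$. Setting $h := p$ and $g := q$ and using the identity $\tilde f(y,x) - q(y) + p(x) = -\big(f(x,y) - p(x) + q(y)\big)$, I would translate the supinf optimality back: the elementary relations $\inf_x(-F) = -\sup_x F$ and $\sup_y(-\,\cdot\,) = -\inf_y(\,\cdot\,)$ convert the supinf optimality of $(y_0,x_0)$ for $-\big(f - h + g\big)$ into the infsup optimality of $(x_0,y_0)$ for $f - h + g$, which is exactly the conclusion of Theorem~\ref{infsup}. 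The normalizations $h(x_0)=g(y_0)=0$ and the norm bounds $\|h\|_\infty = \|p\|_\infty < \delta$ and $\|g\|_\infty = \|q\|_\infty < \varepsilon$ are inherited verbatim.

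The only delicate point is bookkeeping: one must stay consistent about which variable is "first" after the swap and which sign each term carries, so that the perturbation $-q(y) + p(x)$ for $\tilde f$ lands on the correct pattern $-h(x) + g(y)$ for $f$, and so that $\varepsilon$ and $\delta$ end up matched with $\|g\|_\infty < \varepsilon$ and $\|h\|_\infty < \delta$. I expect no genuine obstacle beyond this, since the lower semicontinuity of $w_f$ guaranteed by (A3) --- the infsup analogue of the upper semicontinuity of $v_f$ used in Theorem~\ref{supinf} --- is precisely what makes the dual hypotheses (A1)--(A2) for $\tilde f$ hold, which is the content of the remark preceding the statement.
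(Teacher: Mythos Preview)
Your reduction is correct. The sign-and-swap map $\tilde f(y,x):=-f(x,y)$ really does carry (A3)--(A4) for $f$ to (A1)--(A2) for $\tilde f$, the two approximate-optimality hypotheses transfer as you wrote, and the identity $\sup_y\inf_x\{\tilde f-q+p\}=-\inf_y\sup_x\{f-p+q\}$ converts a supinf solution $(y_0,x_0)$ for $\tilde f$ into an infsup solution $(x_0,y_0)$ for $f$ with the correct perturbations $h:=p$, $g:=q$ and the correct norm bounds.

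The paper does not actually give a proof of this theorem: it simply observes that (A3) makes $w_f$ lower semicontinuous and says one can ``follow the lines of the proof'' of Theorem~\ref{supinf} in \cite{Kend_Rev_2017}, i.e.\ rerun the two applications of Theorem~\ref{basic_lemma1} with the roles of min and max interchanged. Your approach is genuinely different: rather than reproving the variational-principle argument in the dual setting, you apply Theorem~\ref{supinf} itself as a black box to $\tilde f$. What this buys you is economy --- no need to reopen the machinery of \cite{Kend_Rev_2017} --- at the cost of a careful bookkeeping pass on signs and variable order, which you handled correctly. The paper's route, by contrast, makes transparent which semicontinuity hypothesis is doing the work at each of the two perturbation steps. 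Both are valid; yours is the cleaner formal deduction.
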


 We will prove the following minimax variational principle in completely regular topological spaces.
 
\begin{theo} \label{saddle_point} Let $X$ and $Y$ be completely regular topological spaces and $f:X \times Y \rightarrow [-\infty, +\infty]$ be an extended real-valued function which satisfies the assumptions $(A1)$-$(A4)$. Let $\varepsilon'>0$, $\varepsilon''>0$, $x_0 \in X$ and $y_0 \in Y$ be such that \[v_f(x_0)>\sup_{x \in X} v_f(x) - \varepsilon',\] \[w_f(y_0)<\inf_{y \in Y} w_f(y) + \varepsilon''.\] Then, there exist continuous bounded functions $k: X \rightarrow \mathbb R_{+}$ and $r: Y \rightarrow \mathbb R_{+}$, such that $k(x_0)=r(y_0)=0$, $\|k\|_{\infty} < 2\varepsilon'+\varepsilon''+\Delta_f$, $\|r\|_{\infty} < \varepsilon'+2\varepsilon''+\Delta_f$, and the function $f(x,y)-k(x)+r(y)$ has a saddle point at $(x_0,y_0)$.
\end{theo}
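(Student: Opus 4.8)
The plan is to obtain the saddle point by superposing the two one-sided perturbations produced by the supinf and infsup variational principles (Theorems~\ref{supinf} and~\ref{infsup}). The guiding observation is that, because the saddle point is prescribed at the \emph{fixed} pair $(x_0,y_0)$, the defining inequalities $F(x,y_0)\le F(x_0,y_0)\le F(x_0,y)$ for the perturbed objective $F:=f-k+r$ decouple: since $k(x_0)=r(y_0)=0$, the left inequality only concerns $k$ and the slice $f(\cdot,y_0)$, while the right one only concerns $r$ and the slice $f(x_0,\cdot)$. Thus it suffices to make $x_0$ a maximizer of $f(\cdot,y_0)-k(\cdot)$ and $y_0$ a minimizer of $f(x_0,\cdot)+r(\cdot)$, both with common value $f(x_0,y_0)$.

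First I would record the elementary sandwich
\[
V_f-\varepsilon'<v_f(x_0)\le f(x_0,y_0)\le w_f(y_0)<W_f+\varepsilon'',
\]
where the outer strict inequalities are the hypotheses and the inner ones follow from $v_f(x_0)=\inf_{y}f(x_0,y)$ and $w_f(y_0)=\sup_{x}f(x,y_0)$. Since $V_f$ and $W_f$ are finite under $(A1)$-$(A4)$, this shows that $f(x_0,y_0)\in\R$ and that both gaps $f(x_0,y_0)-v_f(x_0)$ and $w_f(y_0)-f(x_0,y_0)$ are strictly smaller than $\delta:=\Delta_f+\varepsilon'+\varepsilon''>0$. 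This common bound $\delta$ is exactly what lets me invoke \emph{both} principles at the \emph{same} point $(x_0,y_0)$.

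Next I apply Theorem~\ref{supinf} with $\varepsilon'$ and $\delta$ (legitimate because $f(x_0,y_0)<v_f(x_0)+\delta$), obtaining $q:X\to\R_+$, $p:Y\to\R_+$ with $q(x_0)=p(y_0)=0$, $\|q\|_\infty<\varepsilon'$, $\|p\|_\infty<\delta$, and a supinf solution of $f-q+p$ at $(x_0,y_0)$; reading off the inner optimality gives $f(x_0,y)+p(y)\ge f(x_0,y_0)$ for all $y$. Symmetrically, Theorem~\ref{infsup} with $\varepsilon''$ and $\delta$ (legitimate because $f(x_0,y_0)>w_f(y_0)-\delta$) yields $h:X\to\R_+$, $g:Y\to\R_+$ with $h(x_0)=g(y_0)=0$, $\|h\|_\infty<\delta$, $\|g\|_\infty<\varepsilon''$, and an infsup solution of $f-h+g$ at $(x_0,y_0)$, from which $f(x,y_0)-h(x)\le f(x_0,y_0)$ for all $x$. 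Setting $k:=q+h$ and $r:=p+g$ gives continuous bounded nonnegative functions with $k(x_0)=r(y_0)=0$ and, by the triangle inequality for the supremum norm, $\|k\|_\infty<\varepsilon'+\delta=2\varepsilon'+\varepsilon''+\Delta_f$ and $\|r\|_\infty<\delta+\varepsilon''=\varepsilon'+2\varepsilon''+\Delta_f$, which are the asserted bounds.

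It remains to check that $(x_0,y_0)$ is a saddle point of $F=f-k+r$, and this is the one step that is more than bookkeeping: the principles only guarantee that $(x_0,y_0)$ solves the supinf problem for one perturbed function and the infsup problem for a \emph{different} one, whereas a saddle point demands both one-sided extremalities for a \emph{single} function. The resolution is that the ``cross'' perturbations are nonnegative and vanish at the base point. Indeed $F(x_0,y_0)=f(x_0,y_0)$, and for every $x$ one has $F(x,y_0)=f(x,y_0)-q(x)-h(x)\le f(x,y_0)-h(x)\le f(x_0,y_0)$ (using $q\ge0$ and the infsup slice fact), while for every $y$ one has $F(x_0,y)=f(x_0,y)+p(y)+g(y)\ge f(x_0,y)+p(y)\ge f(x_0,y_0)$ (using $g\ge0$ and the supinf slice fact); these are precisely the saddle inequalities. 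I would flag in passing that a single application of Theorem~\ref{basic_lemma1} to each slice $-f(\cdot,y_0)$ and $f(x_0,\cdot)$ already proves the statement directly, with the even smaller bound $\Delta_f+\varepsilon'+\varepsilon''$ on both norms; the route above is the one that reproduces precisely the stated constants.
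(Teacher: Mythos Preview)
Your proof is correct and follows essentially the same route as the paper: derive the common gap bound $\delta=\Delta_f+\varepsilon'+\varepsilon''$, invoke Theorems~\ref{supinf} and~\ref{infsup} at $(x_0,y_0)$ with parameters $(\varepsilon',\delta)$ and $(\varepsilon'',\delta)$ respectively, set $k=q+h$ and $r=p+g$, and use the nonnegativity of $q$ and $g$ to merge the two one-sided optimalities into the saddle inequalities. Your closing remark that a direct application of Theorem~\ref{basic_lemma1} to the two slices already yields the result with the sharper bound $\Delta_f+\varepsilon'+\varepsilon''$ on both $\|k\|_\infty$ and $\|r\|_\infty$ is a valid and worthwhile observation that the paper does not make.
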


\begin{proof}
We begin with two chains of inequalities:
\begin{equation}\label {infsup_ineq}
f(x_0,y_0)-\varepsilon'' \leq \sup_{x \in X} f(x,y_0)-\varepsilon''=w_f(y_0)-\varepsilon''
<\inf_Y w_f= \inf_{y \in Y}\sup_{x \in X}f(x,y),
\end{equation}
and
\begin{equation}\label {supinf_ineq}
f(x_0,y_0)+\varepsilon' \geq \inf_{y \in Y} f(x_0,y)+\varepsilon'=v_f(x_0)+\varepsilon'
>\sup_X v_f= \sup_{x \in X}\inf_{y \in Y}f(x,y).
\end{equation}
Combining the equality 
$$\inf_{y \in Y} \sup_{x \in X}f(x,y) - \Delta_f=\sup_{x \in X} \inf_{y \in Y}f(x,y),$$
with  \eqref{infsup_ineq} and \eqref{supinf_ineq}, we get
\[f(x_0,y_0)- \varepsilon'' - \Delta_f < \inf_{y \in Y}\sup_{x \in X}f(x,y) - \Delta_f = \sup_{x \in X}\inf_{y \in Y}f(x,y)<\inf_{y \in Y} f(x_0,y)+\varepsilon',\]
hence
\[f(x_0,y_0)<\inf_{y \in Y} f(x_0,y)+\varepsilon'+\varepsilon''+\Delta_f.\]
Analogously, we get
\[f(x_0,y_0)>\sup_{x \in X} f(x,y_0)-\varepsilon'-\varepsilon''-\Delta_f.\]

Now we apply Theorem \ref{supinf} to the point $x_0$ for $\varepsilon=\varepsilon'$, and to the point $y_0$ for $\delta=\varepsilon'+\varepsilon''+\Delta_f$. Therefore, there exist continuous bounded functions  $q: X \rightarrow \mathbb R_{+}$, and $p: Y \rightarrow \mathbb R_{+}$ such that $q(x_0)=p(y_0)=0$, $\|q\|_{\infty} < \varepsilon'$, $\|p\|_{\infty} < \varepsilon'+\varepsilon''+\Delta_f$ and the supinf problem \eqref{N1} has a solution at $(x_0,y_0)$, i.e.
\begin{equation} \label{supinf_sol}
\sup_{x \in X} \inf_{y \in Y} \{f(x,y)-q(x)+p(y)\}=\inf_{y \in Y} \{f(x_0,y)-q(x_0)+p(y)\}=f(x_0,y_0).
\end{equation}

Further, we apply Theorem \ref{infsup} to the point $y_0$ for $\varepsilon=\varepsilon''$, and to the point $x_0$ for $\delta=\varepsilon'+\varepsilon''+\Delta_f$. Hence, there exist continuous bounded functions $h: X \rightarrow \mathbb R_{+}$, $g: Y \rightarrow \mathbb R_{+}$ such that $h(x_0)=g(y_0)=0$, $\|h\|_{\infty} < \varepsilon'+\varepsilon''+\Delta_f$, $\|g\|_{\infty} < \varepsilon'$ and the infsup problem \eqref{N2} has a solution at $(x_0,y_0)$, i.e.
\begin{equation} \label{infsup_sol}
\inf_{y \in Y} \sup_{x \in X} \{f(x,y)-h(x)+g(y)\}=\sup_{x \in X} \{f(x,y_0)-h(x)+g(y_0)\}=f(x_0,y_0).
\end{equation}

Let $x \in X$, $y \in Y$ be arbitrary. As $q(x) \geq 0$, $g(y) \geq 0$, from \eqref{infsup_sol} and \eqref{supinf_sol} it follows that
\[f(x,y_0)-h(x)-q(x)+p(y_0)+g(y_0)\leq f(x,y_0)-h(x)+g(y_0) \leq \]
\[ f(x_0,y_0)\leq\]
\[f(x_0,y)+p(y)-q(x_0)\leq f(x_0,y)+p(y)+g(y)-q(x_0)-h(x_0).\]
Setting $k(x):=h(x)+q(x)$ and $r(y):=p(y)+g(y)$ we get the conclusion of the theorem.
\end{proof}

Note that whenever a function $f$ satisfies the assumptions $(A1)$-$(A4)$, for any $\varepsilon'>0$ and $\varepsilon''>0$ one can always find $x_0$ and $y_0$ satisfying the assumptions of  Theorem \ref{saddle_point}.

\begin{defi}
For a function $f:X\times Y\to [-\infty,+\infty]$, such that $\Delta_f=0$ and $\varepsilon>0$ we say that $(x_0,y_0) \in X \times Y$ is an $\varepsilon$-saddle point for  $f$ if
\[v_f(x_0)>\sup_{x \in X} v_f(x) - \varepsilon/3,\] \[w_f(y_0)<\inf_{y \in Y} w_f(y) + \varepsilon/3.\]
\end{defi}

If $f$ satisfies $(A1)$-$(A4)$ and  $\Delta_f=0$, from Theorem~\ref{saddle_point} easily follows a variational principle which states that we can perturb the function $f$ by  functions with arbitrary small  norms in a way that the perturbed function has a saddle point.

\begin{theo} \label{saddle_point2} 
Let $X$ and $Y$ be completely regular topological spaces,  $f:X\times Y\to [-\infty,+\infty]$ satisfy $(A1)$-$(A4)$ and $\Delta_f=0$. Let $\varepsilon>0$,  and $(x_0,y_0) \in X \times Y$ be an $\epsilon$-saddle point for $f$. Then, there exist continuous bounded functions $k: X \rightarrow \mathbb R_{+}$, and $r: Y \rightarrow \mathbb R_{+}$, such that $k(x_0)=r(y_0)=0$, $\|k\|_{\infty} < \varepsilon$, $\|r\|_{\infty} < \varepsilon$, and the function $f(x,y)-k(x)+r(y)$ has a saddle point at $(x_0,y_0)$.
\end{theo}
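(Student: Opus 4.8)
The plan is to obtain this as a direct specialization of Theorem~\ref{saddle_point}, choosing the two perturbation parameters symmetrically so that the general norm bounds collapse to $\varepsilon$. First I would set $\varepsilon':=\varepsilon/3$ and $\varepsilon'':=\varepsilon/3$. The two defining inequalities of an $\varepsilon$-saddle point, namely $v_f(x_0)>\sup_{x\in X}v_f(x)-\varepsilon/3$ and $w_f(y_0)<\inf_{y\in Y}w_f(y)+\varepsilon/3$, are then precisely the two hypotheses required to invoke Theorem~\ref{saddle_point} at the point $(x_0,y_0)$ with these values of $\varepsilon'$ and $\varepsilon''$. Since $f$ satisfies $(A1)$--$(A4)$ and $X,Y$ are completely regular, all the standing assumptions of that theorem are in force.

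Applying Theorem~\ref{saddle_point} yields continuous bounded functions $k:X\to\mathbb R_+$ and $r:Y\to\mathbb R_+$ with $k(x_0)=r(y_0)=0$ such that $f(x,y)-k(x)+r(y)$ has a saddle point at $(x_0,y_0)$, together with the estimates $\|k\|_\infty<2\varepsilon'+\varepsilon''+\Delta_f$ and $\|r\|_\infty<\varepsilon'+2\varepsilon''+\Delta_f$. It then remains only to substitute the chosen values and use $\Delta_f=0$: the first bound becomes $2(\varepsilon/3)+(\varepsilon/3)+0=\varepsilon$ and the second becomes $(\varepsilon/3)+2(\varepsilon/3)+0=\varepsilon$. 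Hence $\|k\|_\infty<\varepsilon$ and $\|r\|_\infty<\varepsilon$, which is exactly the desired conclusion.

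There is no genuine obstacle here, as the statement is a specialization of Theorem~\ref{saddle_point}. The only point needing attention is the arithmetic matching the symmetric split $\varepsilon'=\varepsilon''=\varepsilon/3$ with $\Delta_f=0$ to the two coefficients $2\varepsilon'+\varepsilon''+\Delta_f$ and $\varepsilon'+2\varepsilon''+\Delta_f$, both of which then equal $\varepsilon$. The hypothesis $\Delta_f=0$ serves precisely to delete the additive $\Delta_f$ from both estimates, and it is in any case built into the very definition of an $\varepsilon$-saddle point; the saddle-point property itself is supplied unconditionally by Theorem~\ref{saddle_point}.
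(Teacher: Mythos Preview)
Your proposal is correct and matches the paper's approach: the paper states that Theorem~\ref{saddle_point2} ``easily follows'' from Theorem~\ref{saddle_point} without giving further details, and your argument---applying Theorem~\ref{saddle_point} with $\varepsilon'=\varepsilon''=\varepsilon/3$ and using $\Delta_f=0$---is precisely the intended specialization.
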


If $f$ is additively separable function, i.e. $f(x,y)=f_1(x)+f_2(y)$, then $\Delta_{f+k+r}=0$ for every $k \in C(X)$ and $r \in C(Y)$. The following theorem constitutes a dense variational principle for saddle point problem.
\begin{theo}
Let $X$ and $Y$ be completely regular topological spaces and let $f:X\times Y\to [-\infty,+\infty]$ satisfy $(A1)$-$(A4)$ and $f(x,y)=f_1(x)+f_2(y)$. Then  the set $\{(k,r) \in C(X) \times C(Y):$ the function $f(x,y)+k(x)+r(y), (x,y) \in X \times Y$  has a saddle point$\}$ is a dense subset of $C(X) \times C(Y)$.
\end{theo}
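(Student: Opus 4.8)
The plan is to prove density directly from the definition: given an arbitrary pair $(k_0,r_0) \in C(X)\times C(Y)$ and an arbitrary $\epsilon>0$, I will produce a pair $(k,r)$ with $\|k-k_0\|_\infty<\epsilon$ and $\|r-r_0\|_\infty<\epsilon$ for which $f(x,y)+k(x)+r(y)$ admits a saddle point. The strategy is to absorb the base pair $(k_0,r_0)$ into $f$, producing a new separable function to which the perturbation result of Theorem~\ref{saddle_point2} applies, and then to transfer the resulting saddle point back.

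First I would set $\tilde f(x,y):=f(x,y)+k_0(x)+r_0(y)$. Since $f$ is additively separable and $k_0,r_0$ are continuous and bounded, $\tilde f$ is again additively separable, so by the observation immediately preceding the statement we have $\Delta_{\tilde f}=0$. The key verification here — which I expect to be the main, though essentially routine, obstacle — is that $\tilde f$ inherits the standing assumptions (A1)--(A4) from $f$. Adding the continuous bounded function $(x,y)\mapsto k_0(x)+r_0(y)$ preserves the upper semicontinuity of $f(\cdot,y)$ and the lower semicontinuity of $f(x,\cdot)$, which yields (A1) and (A3). Moreover, using separability one checks that $v_{\tilde f}$ and $w_{\tilde f}$ differ from $v_f$ and $w_f$, respectively, only by the bounded continuous term $k_0$, respectively $r_0$, up to an additive real constant; since $k_0$ and $r_0$ are bounded, the boundedness and properness required in (A2) and (A4) are retained. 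It is precisely the separable form of $f$ that makes these additive constants split off cleanly.

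Next, because $\tilde f$ satisfies (A1)--(A4) with $\Delta_{\tilde f}=0$, the remark following Theorem~\ref{saddle_point} (equivalently, the very definitions of supremum and infimum together with the boundedness provided by (A2) and (A4)) furnishes a point $(x_0,y_0)$ that is an $\epsilon$-saddle point of $\tilde f$, that is,
\[
v_{\tilde f}(x_0) > \sup_{x \in X} v_{\tilde f}(x) - \epsilon/3, \qquad w_{\tilde f}(y_0) < \inf_{y \in Y} w_{\tilde f}(y) + \epsilon/3 .
\]
Applying Theorem~\ref{saddle_point2} to $\tilde f$ at this $\epsilon$-saddle point then yields continuous bounded functions $k_1:X\to\mathbb R_{+}$ and $r_1:Y\to\mathbb R_{+}$ with $\|k_1\|_\infty<\epsilon$ and $\|r_1\|_\infty<\epsilon$ such that $\tilde f(x,y)-k_1(x)+r_1(y)$ has a saddle point at $(x_0,y_0)$.

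Finally I would unwind the definition of $\tilde f$. The function
\[
\tilde f(x,y)-k_1(x)+r_1(y) = f(x,y) + \bigl(k_0(x)-k_1(x)\bigr) + \bigl(r_0(y)+r_1(y)\bigr)
\]
has a saddle point, so setting $k:=k_0-k_1\in C(X)$ and $r:=r_0+r_1\in C(Y)$ places $(k,r)$ in the set under consideration, while $\|k-k_0\|_\infty=\|k_1\|_\infty<\epsilon$ and $\|r-r_0\|_\infty=\|r_1\|_\infty<\epsilon$. As $(k_0,r_0)\in C(X)\times C(Y)$ and $\epsilon>0$ were arbitrary, this establishes density in $C(X)\times C(Y)$ and completes the argument.
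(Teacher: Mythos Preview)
Your argument is correct and is precisely the route the paper intends: the theorem is stated without proof, immediately after the observation that additive separability forces $\Delta_{f+k+r}=0$ for all $(k,r)\in C(X)\times C(Y)$, so that Theorem~\ref{saddle_point2} applies to every bounded continuous perturbation of $f$. Your reduction---absorb $(k_0,r_0)$ into $f$, pick an $\varepsilon$-saddle point, invoke Theorem~\ref{saddle_point2}, and unwind---is exactly this; one small remark is that the verification of (A2) and (A4) for $\tilde f$ does not actually require separability, since adding the bounded function $k_0(x)+r_0(y)$ shifts $v_f$ and $w_f$ by at most $\|k_0\|_\infty+\|r_0\|_\infty$ pointwise.
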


\section{Well-posedness of saddle point problems}
Let $f: X \times Y \rightarrow [-\infty,+\infty]$ and $X$ and $Y$ be completely regular topological spaces. As it is defined in \cite{KL} (in the constrained case) a sequence of points $\{(x_n, y_n)\}_n \in X \times Y$ is called optimizing for the supinf  problem for $f$ if $v_f(x_n) \rightarrow V_f$ and $f(x_n,y_n) \rightarrow V_f$. Analogously,   $\{(x_n, y_n)\}_n  $ is   optimizing for the infsup  problem for $f$ if $w_f(y_n) \rightarrow W_f$ and $f(x_n,y_n) \rightarrow  W_f$.

Here we consider the following 
\begin{defi}\label{def_wp} 
A sequence $\{(x_n, y_n)\}_n \in X \times Y$ is called optimizing for the saddle point problem for $f: X \times Y \rightarrow [-\infty, +\infty]$ if
\begin{enumerate}
  \item[\textsl{1.}] $v_f(x_n) \rightarrow V_f$;
  \item[\textsl{2.}] $w_f(y_n) \rightarrow W_f$;
  \item[\textsl{3.}] $\Delta_f=0$.
\end{enumerate}
\end{defi}

Let us note that in \cite{CM} maximinimizing sequences  are considered. A sequence  $\{(x_n, y_n)\}_n \in X \times Y$ is called maximinimizing for the saddle point problem for $f$ if $w_f(y_n)-v_f(x_n) \rightarrow 0$, as $n \rightarrow \infty$. Since $v_f(x_n)=\inf_{y \in Y} f(x_n,y) \leq f(x_n,y_n) \leq \sup_{x \in X} f(x,y_n)=w_f(y_n)$, it is clear that every optimizing sequence for the saddle point problem is maximinimizing. Moreover, any optimizing sequence for the saddle point problem for $f$ is optimizing for the supinf and infsup problems for $f$.

\smallskip

The supinf (resp. infsup) problem for the function $f$ is well-posed if any optimizing for it sequence converges to its unique solution.

Moreover, the supinf  problem for the function $f$ is sup-well-posed  if the problem $\sup_{x \in X} v_f(x)$  is well-posed, see \cite{Kend_Rev_2017}. In such a case, the unique point realizing the maximum is called sup-solution.

Analogously, infsup  problem for the function $f$ is inf-well-posed  if the problem $\inf_{y \in Y} w_f(y)$  is well-posed and the unique point realizing the minimum is called inf-solution.

\begin{defi}\label{def_wpsp}
The saddle point problem for $f: X \times Y \rightarrow [-\infty, +\infty]$ is well-posed if every optimizing for it sequence  converges to the unique solution  of the problem.
\end{defi}

The saddle point problem for a function $f$ is well-posed if and only if the supinf and infsup problems for $f$ are sup-well-posed and inf-well-posed, respectively, and $\Delta_f=0$. Well-posedness of the saddle point problem for $f$ does not entail well-posedness of the corresponding supinf and infsup problems. See Remark 3-1 in~\cite{CM}. 

\newpage

If $f$ is such that $\Delta_f=0$, then for $f$:
\[\{(x_n,y_n)\}_n\text{is an optimizing sequence for the saddle point problem}\]
\[\Updownarrow\]
\[\{(x_n,y_n)\}_n\text{is an optimizing sequence for both supinf and infsup problems,}\]
and
\[\text{the supinf and infsup  problems for}f\text{are well-posed}\]
\[\Downarrow\]
\[\text{the saddle point problem for}f \text{is well-posed}\]
\[\Updownarrow\]
\[\text{the supinf problem  is sup-well-posed and the infsup problem is inf-well-posed.}\]

The next result is about perturbations for which saddle point problem is  well-posed in the sense of Definition~\ref{def_wpsp}.

\begin{theo}\label{saddle_point_wp} 
Let for $X,Y$ and $f: X \times Y \rightarrow [-\infty, +\infty]$   the assumptions in Theorem~\ref{saddle_point} hold and let $k$ and $r$ be the functions from its conclusion. Suppose that $x_0$ has a countable local base in $X$ and $y_0$ has a countable local base in $Y$. Then, for arbitrary
 $\delta>0$ there exist continuous bounded functions $k': X \rightarrow \mathbb R_{+}$ and $r': Y \rightarrow \mathbb R_{+}$, such that $k(x_0)=r(y_0)=0$, $\|k'\|_{\infty} < \delta$, $\|r'\|_{\infty} < \delta$, and for the function $g(x,y):=f(x,y)-k(x)+r(y)-k'(x)+r'(y)$
\begin{enumerate}
    \item[(a)] the supinf problem is sup-well-posed with unique sup-solution at $x_0$;
    \item[(b)] the infsup problem  is inf-well-posed with unique inf-solution at $y_0$;
    \item[(c)] the saddle point problem  is well-posed with unique solution at $(x_0,y_0)$.
\end{enumerate}
\end{theo}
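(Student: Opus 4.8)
The plan is to keep the saddle point produced by Theorem~\ref{saddle_point} and to superimpose two ``forcing'' perturbations attached to $x_0$ and $y_0$. Write $F(x,y):=f(x,y)-k(x)+r(y)$, so that by Theorem~\ref{saddle_point} the pair $(x_0,y_0)$ is a saddle point of $F$; in particular $V_F=W_F=F(x_0,y_0)$, together with the saddle inequalities $F(x,y_0)\le F(x_0,y_0)\le F(x_0,y)$ for all $x,y$. To obtain $k'$ and $r'$ I would apply the strong variational principle, Theorem~\ref{basic_lemma2}, to the constant function $0$ on $X$ at the point $x_0$ and to the constant function $0$ on $Y$ at $y_0$, with $\varepsilon=\delta$. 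Since $0-\inf_X 0=0$, this produces continuous bounded $k':X\to\R_+$ and $r':Y\to\R_+$ with $k'(x_0)=r'(y_0)=0$, $\|k'\|_\infty<\delta$, $\|r'\|_\infty<\delta$, such that $k'$ attains its strong minimum at $x_0$ and $r'$ its strong minimum at $y_0$. Thus $k'(x)>0$ for $x\ne x_0$ and $k'(x_n)\to0$ forces $x_n\to x_0$, and symmetrically for $r'$ --- exactly the mechanism already used in the proof of Theorem~\ref{characterization2}.

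Set $g(x,y):=F(x,y)-k'(x)+r'(y)$. First I would check that $(x_0,y_0)$ remains a saddle point of $g$: using $k',r'\ge0$ and the saddle inequalities for $F$, one has $g(x,y_0)=F(x,y_0)-k'(x)\le F(x_0,y_0)=g(x_0,y_0)$ and $g(x_0,y)=F(x_0,y)+r'(y)\ge F(x_0,y_0)=g(x_0,y_0)$. Hence $\Delta_g=0$ and $V_g=W_g=F(x_0,y_0)$. For part (a), pulling $k'(x)$ out of the infimum gives $v_g(x)=\Phi(x)-k'(x)$ with $\Phi(x):=\inf_{y}[\,F(x,y)+r'(y)\,]$, and the saddle structure of $F$ yields the one-sided bound $\Phi(x)\le F(x,y_0)+r'(y_0)=F(x,y_0)\le V_F$. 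Therefore, if $v_g(x_n)\to V_g=V_F$, the squeeze $v_g(x_n)\le\Phi(x_n)\le V_F$ forces $\Phi(x_n)\to V_F$ and so $k'(x_n)=\Phi(x_n)-v_g(x_n)\to0$; the forcing property of $k'$ then gives $x_n\to x_0$. The same bound shows $v_g(x)=V_F$ only if $k'(x)=0$, i.e. only at $x_0$, so $x_0$ is the unique maximizer and the supinf problem for $g$ is sup-well-posed. Part (b) is symmetric, with $w_g(y)=\Psi(y)+r'(y)$, $\Psi(y):=\sup_x[\,F(x,y)-k'(x)\,]\ge F(x_0,y)\ge W_F$, so $w_g(y_n)\to W_g$ forces $r'(y_n)\to0$ and hence $y_n\to y_0$.

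Part (c) then follows from (a), (b) and $\Delta_g=0$ by the equivalence recorded earlier, namely that well-posedness of the saddle point problem is equivalent to the supinf problem being sup-well-posed and the infsup problem inf-well-posed together with $\Delta_g=0$; the unique solution is $(x_0,y_0)$. The step I expect to be the real obstacle --- and the reason one cannot simply apply the strong variational principle to $v_F$ and to $w_F$ separately --- is the coupling between the two perturbations: adding $r'(y)$ alters the inner infimum, so $v_g\ne v_F-k'$, and adding $k'(x)$ alters the inner supremum, so $w_g\ne w_F+r'$. The point of the argument is that this coupling is harmless once $k'$ and $r'$ are forcing functions, because the saddle inequalities for $F$ deliver the one-sided estimates $\Phi\le V_F$ and $\Psi\ge W_F$, and these are precisely what make the two squeeze arguments succeed no matter how $r'$ distorts $\Phi$ and $k'$ distorts $\Psi$. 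Checking that $g$ still satisfies (A1)--(A4), so that $\Delta_g$ is finite and the equivalence applies, is routine since $k+k'\in C(X)$ and $r+r'\in C(Y)$.
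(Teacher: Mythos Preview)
Your proof is correct and follows essentially the same route as the paper's. The only cosmetic difference is in how the forcing functions $k'$ and $r'$ are produced: the paper repeats the explicit construction $k'(x)=\delta\sum_{n\ge1}2^{-n}h_n(x)$ from separating functions $h_n$ attached to a nested countable base $\{U_n\}$ at $x_0$ (and similarly for $r'$), whereas you obtain them in one stroke by applying Theorem~\ref{basic_lemma2} to the zero function. Your $\Phi(x)=\inf_y[F(x,y)+r'(y)]$ is exactly the paper's $v_{f-k+r+r'}(x)$, and the squeeze $v_g(x_n)\le\Phi(x_n)\le V_F$ forcing $k'(x_n)\to0$ is identical to the paper's argument; the forcing property you extract from ``strong minimum'' is what the paper gets from the explicit lower bound $k'(x)\ge\delta\sum_{m\ge n_0}2^{-m}$ when $x\notin U_{n_0}$.
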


\begin{proof}
We follow the idea of the proof of Proposition 2.10 in \cite{Kend_Rev_2017}. Consider countable  local bases of open nested neighbourhoods $\{U_n,\ n \geq 1\}$ of $x_0$ and $\{V_n,\ n \geq 1\}$  of $y_0$, respectively. For each fixed $n \geq 1$, let $h_n : X \rightarrow [0, 1]$ and $g_n : Y \rightarrow [0, 1]$ be  continuous functions such that
\[h_n(x_0) = 0 \text{ and } h_n(X \setminus U_n) \equiv 1,\]
\[g_n(y_0) = 0 \text{ and } g_n(Y \setminus V_n) \equiv 1.\]

Define the functions
\[  k'(x):=\delta \sum_{n=1}^\infty \frac{1}{2^n}h_n(x), \; x \in X,\]
\[ r'(y):=\delta \sum_{n=1}^\infty \frac{1}{2^n}g_n(y), \; y \in Y.\]

The functions $k'$ and $r'$ are  continuous bounded functions with values in $[0,\delta]$. For all $x \in X$ and $y \in Y$, $x \neq x_0$ and $y \neq y_0$, $k'(x)>0$ and $r'(y)>0$. Furthermore, from $k(x_0)=k'(x_0)=0$, $r(y_0)=r'(y_0)=0$ and the fact that $(x_0,y_0)$ is a saddle point for $f(x,y)-k(x)+r(y)$, then for all $x \in X$ and $y \in Y$, $x \neq x_0$ and $y \neq y_0$,
\[f(x,y_0)-k(x)-k'(x)+r(y_0)+r'(y_0)<\]
\[f(x_0,y_0)-k(x_0)-k'(x_0)+r(y_0)+r'(y_0)\]
\[<f(x_0,y)-k(x_0)-k(x_0)+r(y)+r'(y),\]
and therefore $(x_0,y_0)$ is a solution for the saddle point problem, supinf problem and infsup problem for the function $g(x,y)$ in $X \times Y$. Observe that $(x_0,y_0)$ is also a solution for the saddle point problem, supinf problem and infsup problem for the functions $f(x,y)-k(x)+r(y)+r'(y)$ and $f(x,y)-k(x)-k'(x)+r(y)$ in $X \times Y$.

\smallskip

To prove (a) let us consider
$$
\begin{array}{ll}
v_{f-k+r+r'}(x):=\inf_Y \{f(x,y)-k(x)+r(y)+r'(y)\},\\
v_{g}(x):=\inf_Y \{f(x,y)-k(x)-k'(x)+r(y)+r'(y)\}.
\end{array}
$$
Having in mind that $k(x_0)=k'(x_0)=0$ and $r(y_0)=r'(y_0)=0$ and $(x_0,y_0)$ is a solution for the supinf problem for the functions $f(x,y)-k(x)+r(y)+r'(y)$ and $g(x,y)$ in $X \times Y$, we have that
$$
\begin{array}{ll}
v_{f-k+r+r'}(x_0)=v_{g}(x_0)=f(x_0,y_0).
\end{array}
$$
Suppose that $\{x_n\}_n$ is an optimizing sequence for $v_{g}$ and $\{(x_n)\}_n$ does not converge to $x_0$. Then, there would be some integer
$n_0 \geq 1$ and a subsequence of $\{x_n\}_n$ (without renumbering), such that $x_n \notin U_{n_0}$ for every $n$. Further,  $\sup_X v_{g} = \sup_X v_{f-k+r+r'}$ and $v_{g}(x) \leq v_{f-k+r+r'}(x)$,
and since $k'$ has nonnegative values, $k'(x_n)$ converges to 0. But this contradicts $x_n \notin U_{n_0}$ for any $n$ since the latter would imply that $k'(x_n)\ge \delta \sum_{m=n_0}^\infty(1/2^m)>0$ for all~$n$.

\smallskip

Proof of (b) is similar to the proof of (a).

\smallskip

To establish (c) let us observe that from the proofs of (a) and (b) it follows that the supinf and infsup problems for the function $g$ are sup-well-posed and inf-well-posed with unique solutions at $x_0$ and $y_0$, respectively. Moreover $\Delta_{g}=0$ and, therefore, the saddle point problem for $g$ is well-posed with unique solution at $(x_0,y_0)$.
\end{proof}

For a given function $f: X \times Y \rightarrow [-\infty, +\infty]$ denote by $S_f: C(X) \times C(Y) \rightrightarrows X \times Y$ the correspondence which assigns to every couple of functions $s \in C(X)$ and $u \in C(Y)$ the (possibly empty) set of solutions $(x_0,y_0)$ to the saddle point problem for the function $f(x,y)+s(x)+u(y)$. We follow  the proof of Theorem 3 in \cite{GKR} to get the following result.

\begin{theo} \label{wellposed_su} 
Let $X$ and $Y$ be completely regular topological spaces and $f:X \times Y \rightarrow [-\infty, +\infty]$ be an extended real-valued function which satisfies $(A1)$-$(A4)$. The mapping $S_f$ is single-valued and upper semicontinuous at $(s,u) \in C(X) \times C(Y)$ if and only if the saddle-point problem for the function $f(x,y)+s(x)+u(y)$ is well-posed.
\end{theo}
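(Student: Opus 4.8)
The plan is to prove the two implications separately, exploiting throughout that $C(X)\times C(Y)$ is a Banach space, hence metrizable, so that upper semicontinuity of $S_f$ at $(s,u)$ may be tested along sequences $(s_n,u_n)\to(s,u)$. The basic tool I would establish first is a stability estimate. Writing $\tilde f:=f+s+u$, for any $(s',u')\in C(X)\times C(Y)$ one has $v_{f+s'+u'}(x)=s'(x)+\inf_{y}(f(x,y)+u'(y))$, whence $|v_{f+s'+u'}(x)-v_{\tilde f}(x)|\le\|s'-s\|_\infty+\|u'-u\|_\infty$ uniformly in $x$, and symmetrically for $w$; consequently the optimal values depend continuously on the perturbation, $V_{f+s'+u'}\to V_{\tilde f}$ and $W_{f+s'+u'}\to W_{\tilde f}$ as $(s',u')\to(s,u)$. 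I would also record that $\tilde f$ again satisfies $(A1)$--$(A4)$, since adding the continuous bounded $s$ and $u$ preserves upper/lower semicontinuity in each variable and the boundedness and properness of $v_{\tilde f}$ and $w_{\tilde f}$.

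For the direction \emph{well-posed $\Rightarrow$ $S_f$ single-valued and u.s.c.}, single-valuedness is immediate, since well-posedness presupposes a unique solution $(x_0,y_0)$, so $S_f(s,u)=\{(x_0,y_0)\}$. For upper semicontinuity I would argue by contradiction: were it to fail, metrizability produces an open $O\ni(x_0,y_0)$, perturbations $(s_n,u_n)\to(s,u)$, and saddle points $(x_n,y_n)\in S_f(s_n,u_n)\setminus O$. Setting $f_n:=f+s_n+u_n$ and using the characterization $v_{f_n}(x_n)=V_{f_n}$, $w_{f_n}(y_n)=W_{f_n}$ together with the stability estimate and $V_{f_n}\to V_{\tilde f}$, $W_{f_n}\to W_{\tilde f}$, I obtain $v_{\tilde f}(x_n)\to V_{\tilde f}$ and $w_{\tilde f}(y_n)\to W_{\tilde f}$; since $\Delta_{\tilde f}=0$ by well-posedness, $\{(x_n,y_n)\}$ is optimizing for $\tilde f$ in the sense of Definition~\ref{def_wp}. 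Well-posedness then forces $(x_n,y_n)\to(x_0,y_0)$, contradicting $(x_n,y_n)\notin O$.

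For the converse \emph{$S_f$ single-valued and u.s.c. $\Rightarrow$ well-posed}, let $S_f(s,u)=\{(x_0,y_0)\}$ and let $\{(x_n,y_n)\}$ be optimizing for the saddle point problem of $\tilde f$, so $v_{\tilde f}(x_n)\to V_{\tilde f}$, $w_{\tilde f}(y_n)\to W_{\tilde f}$ and $\Delta_{\tilde f}=0$. Passing to the tail where these values are finite, I would put $\varepsilon_n':=V_{\tilde f}-v_{\tilde f}(x_n)+1/n\to0$ and $\varepsilon_n'':=w_{\tilde f}(y_n)-W_{\tilde f}+1/n\to0$, so that $(x_n,y_n)$ satisfies the hypotheses of Theorem~\ref{saddle_point} for $\tilde f$. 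That theorem yields continuous $k_n,r_n\ge0$ with $k_n(x_n)=r_n(y_n)=0$, $\|k_n\|_\infty<2\varepsilon_n'+\varepsilon_n''$ and $\|r_n\|_\infty<\varepsilon_n'+2\varepsilon_n''$ (using $\Delta_{\tilde f}=0$), for which $\tilde f-k_n+r_n=f+(s-k_n)+(u+r_n)$ has a saddle point at $(x_n,y_n)$. Hence $(x_n,y_n)\in S_f(s_n,u_n)$ with $s_n:=s-k_n$, $u_n:=u+r_n$, and $(s_n,u_n)\to(s,u)$ because $\|k_n\|_\infty,\|r_n\|_\infty\to0$. Upper semicontinuity at $(s,u)$ then forces $(x_n,y_n)\to(x_0,y_0)$, which is precisely well-posedness.

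The main obstacle is the converse direction, and specifically the realization step: converting each near-optimal point $(x_n,y_n)$ of the optimizing sequence into an \emph{exact} saddle point of an explicitly perturbed function whose perturbation tends to $(s,u)$. This is where Theorem~\ref{saddle_point} is indispensable, and the delicate bookkeeping is that the admissible radii $\varepsilon_n',\varepsilon_n''$ can be driven to zero precisely because $\{(x_n,y_n)\}$ is optimizing and $\Delta_{\tilde f}=0$; without the latter the norm bounds supplied by Theorem~\ref{saddle_point} would not vanish and the perturbations $(s_n,u_n)$ would fail to converge to $(s,u)$.
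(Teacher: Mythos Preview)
Your proposal is correct and follows essentially the same approach as the paper's proof: both directions hinge on the stability estimate $|v_{f+s'+u'}-v_{\tilde f}|\le\|s'-s\|_\infty+\|u'-u\|_\infty$ (and its $w$-analogue) together with the saddle-point perturbation theorem to convert near-optimal points into exact saddle points of nearby perturbations. The only cosmetic differences are that the paper invokes Theorem~\ref{saddle_point2} (the $\varepsilon$-saddle-point version) where you invoke the more general Theorem~\ref{saddle_point} with $\Delta_{\tilde f}=0$, and in the direction ``u.s.c.\ $\Rightarrow$ well-posed'' the paper argues by contradiction with a single fixed $\varepsilon$ coming from upper semicontinuity, whereas you build the whole sequence $(s_n,u_n)\to(s,u)$ and deduce convergence directly; neither variation changes the substance of the argument.
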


\begin{proof}
$\Rightarrow)$ Let $\{(x_n,y_n)\}_n \in X \times Y$ be an optimizing sequence for the saddle point problem for $f(x,y)+s(x)+u(y)$. According to the definition of such a sequence, it holds that:
\begin{enumerate}
  \item $v_{f+s+u}(x_n)  \rightarrow V_{f+s+u}$;
  \item $w_{f+s+u}(y_n) \rightarrow W_{f+s+u}$;
  \item $\Delta_{f+s+u}=0$.
\end{enumerate}
If $S_f(s,u)=\{(x_0,y_0)\}$, so $\Delta_{f+s+u}=0$. Suppose that $\{(x_n,y_n)\}_n$ does not converge to $(x_0,y_0)$. Then, there exist open sets $U$ and $V$, such that $x_0 \in U$, $y_0 \in V$ and a subsequence (without renumbering) $\{(x_n,y_n)\}_n$ such that $(x_n,y_n) \notin U \times V$ for all~$n$.

\smallskip

From the upper semicontinuity of $S_f(s,u)$ there exists $\varepsilon>0$, such that $\Vert s'-s \Vert_\infty<\varepsilon$, $\Vert u'-u \Vert_\infty<\varepsilon$, $s' \in C(X)$, and $u' \in C(Y)$ imply $S_f(s',u') \subset U \times V$.

\smallskip

Let $n$ be so large that $V_{f+s+u}-v_{f+s+u}(x_n) < \varepsilon/3$ and $w_{f+s+u}(y_n)-W_{f+s+u} < \varepsilon/3$. Applying Theorem~\ref{saddle_point2} for the $\varepsilon$-saddle point $(x_n,y_n)$ of the function $f(x,y)+s(x)+u(y)$ one obtains  functions $s_n \in C(X)$ and $u_n \in C(Y)$, such that $\Vert s_n \Vert_{\infty} < \varepsilon$, $\Vert u_n \Vert_{\infty} < \varepsilon$ and $(x_n,y_n)$ is a solution to the supinf problem for the function $f(x,y)+s(x)-s_n(x)+u(y)+u_n(y)$. But $\Vert s+s_n-s \Vert_{\infty}<\varepsilon$, $\Vert u-u_n-u \Vert_{\infty}<\varepsilon$ and $(x_n,y_n) \in S_f(s-s_n,u+u_n)$ which contradicts $(x_n,y_n) \notin U \times V$.

\smallskip

$\Leftarrow)$ Suppose that the saddle point problem for the function $f(x,y)+s(x)+u(y)$ is well-posed with unique solution $(x_0,y_0)$. Hence, $\Delta_{f+s+u}=0$ and $S_f(s,u)$ is single-valued. Suppose that $S_f$ is not upper semicontinuous at $(s,u)$. Then there would  exist open neighbourhoods $U$ of $x_0$ and $V$ of $y_0$, respectively, such that for every $n \geq 1$ there will be $s_n \in C(X)$ and $u_n \in C(Y)$ with $\Vert s_n-s\Vert_{\infty} <1/n$ and $\Vert u_n-u\Vert_{\infty} <1/n$, such that $S_f(s_n,u_n)\notin U \times V$, i.e. for every $n$ there will be $(x_n,y_n) \in S_f(s_n,u_n) \setminus U \times V$.

\smallskip

Observe that $\Delta_{f+s_n+u_n}=0$, and 
$$
\begin{array}{ll}
v_{f+s_n+u_n}(x_n)&:=\inf_{y \in Y}\{f(x_n,y)+s_n(x_n)+u_n(y)\}\\
& \; =\sup_{x \in X} \{f(x,y_n)+s_n(x)+u_n(y_n)\}=:w_{f+s_n+u_n}(y_n).
\end{array}
$$
As $\{s_n\}_n$ and $\{u_n\}_n$ converge uniformly on $X$ and $Y$ to $s$ and $u$, respectively, then for every $\varepsilon>0$ we can find $n_0$, such that, for every $n \geq n_0$:
\[|v_{f+s_n+u_n}(x_n)-v_{f+s+u}(x_n)| <\varepsilon,\]
\[|w_{f+s_n+u_n}(y_n)-w_{f+s+u}(y_n)| <\varepsilon.\]
Therefore $v_{f+s+u}(x_n)$ and $w_{f+s+u}(y_n)$ are close eventually and $\{(x_n,y_n)\}_n$ is an optimizing sequence for $f(x,y)+s(x)+u(y)$. Since the saddle point problem for $f(x,y)+s(x)+u(y)$ is well-posed with unique solution $(x_0,y_0)$, the sequence $\{(x_n,y_n)\}_n$ converges to $(x_0,y_0)$. The latter contradicts the assumption $(x_n,y_n) \in S_f(s_n,u_n) \setminus U \times V$.
\end{proof}

If we consider the correspondence $\widetilde{S}_f: C(X \times Y) \rightrightarrows X \times Y$  which assigns to each function $z \in C(X \times Y)$ the (possibly empty) set of solutions $(x_0,y_0)$ to the saddle point problem for the function $f(x,y)+z(x,y)$, and follow the lines of the proof of Theorem~\ref{wellposed_su} we will obtain

\begin{theo}\label{wellposed_z} 
Let $X$ and $Y$ be completely regular topological spaces and $f:X \times Y \rightarrow [-\infty, +\infty]$ be an extended real-valued function which satisfies the assumptions$(A1)$-$(A4)$. The mapping $\widetilde{S}_f$ is single-valued and upper semicontinuous at $z \in C(X \times Y)$ if and only if the saddle-point problem for the function $f(x,y)+z(x,y)$ is well-posed.
\end{theo}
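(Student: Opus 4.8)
The plan is to mirror the proof of Theorem~\ref{wellposed_su}, the only change being that the separable perturbations $s(x)+u(y)$ are replaced by a single perturbation $z\in C(X\times Y)$. First I would record that $f+z$ again satisfies $(A1)$--$(A4)$: adding a continuous function preserves upper semicontinuity of $f(\cdot,y)$ and lower semicontinuity of $f(x,\cdot)$, while boundedness of $z$ keeps $v_{f+z}$ bounded above and $w_{f+z}$ bounded below and preserves properness. Hence Theorems~\ref{supinf}--\ref{saddle_point2} all apply to $f+z$, and $\widetilde{S}_f(z)$ is exactly the set of saddle points of $f+z$.

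For the necessity direction I would assume $\widetilde{S}_f$ single-valued and upper semicontinuous at $z$, say $\widetilde{S}_f(z)=\{(x_0,y_0)\}$, and take an optimizing sequence $\{(x_n,y_n)\}_n$ for the saddle point problem for $f+z$; by Definition~\ref{def_wp} this gives $\Delta_{f+z}=0$ together with $v_{f+z}(x_n)\to V_{f+z}$ and $w_{f+z}(y_n)\to W_{f+z}$. Arguing by contradiction, suppose $(x_n,y_n)\not\to(x_0,y_0)$, so along a subsequence $(x_n,y_n)\notin U\times V$ for some open $U\ni x_0$, $V\ni y_0$. Upper semicontinuity supplies $\varepsilon>0$ with $\|z'-z\|_\infty<\varepsilon\Rightarrow\widetilde{S}_f(z')\subset U\times V$. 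For $n$ so large that $(x_n,y_n)$ is an $(\varepsilon/2)$-saddle point of $f+z$ --- which makes sense precisely because $\Delta_{f+z}=0$ --- Theorem~\ref{saddle_point2} yields $k_n\in C(X)$, $r_n\in C(Y)$ with $\|k_n\|_\infty,\|r_n\|_\infty<\varepsilon/2$ such that $(x_n,y_n)$ is a saddle point of $(f+z)-k_n+r_n$. Then $z_n:=z-k_n+r_n\in C(X\times Y)$ satisfies $\|z_n-z\|_\infty\le\|k_n\|_\infty+\|r_n\|_\infty<\varepsilon$ and $(x_n,y_n)\in\widetilde{S}_f(z_n)\subset U\times V$, a contradiction. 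So $(x_n,y_n)\to(x_0,y_0)$ and the saddle point problem for $f+z$ is well-posed.

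For sufficiency I would assume the saddle point problem for $f+z$ is well-posed with unique solution $(x_0,y_0)$, whence $\Delta_{f+z}=0$ and $\widetilde{S}_f(z)=\{(x_0,y_0)\}$. If $\widetilde{S}_f$ were not upper semicontinuous at $z$, there would be open $U\ni x_0$, $V\ni y_0$ and $z_n\in C(X\times Y)$ with $\|z_n-z\|_\infty<1/n$ and $(x_n,y_n)\in\widetilde{S}_f(z_n)\setminus(U\times V)$. Each $(x_n,y_n)$ is a saddle point of $f+z_n$, so $v_{f+z_n}(x_n)=V_{f+z_n}$ and $w_{f+z_n}(y_n)=W_{f+z_n}$. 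Since $\|z_n-z\|_\infty<1/n$ yields both $|v_{f+z_n}(x_n)-v_{f+z}(x_n)|\le 1/n$ and $|V_{f+z_n}-V_{f+z}|\le 1/n$ (and likewise for $w$ and $W$), I get $v_{f+z}(x_n)\to V_{f+z}$ and $w_{f+z}(y_n)\to W_{f+z}$, so $\{(x_n,y_n)\}_n$ is optimizing for $f+z$. Well-posedness forces $(x_n,y_n)\to(x_0,y_0)$, contradicting $(x_n,y_n)\notin U\times V$.

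The one genuinely new point --- and the step I would watch most carefully --- is the return, in the necessity direction, from the \emph{separable} perturbation $-k_n(x)+r_n(y)$ produced by Theorem~\ref{saddle_point2} back into the \emph{joint} class $C(X\times Y)$: one must observe that $-k_n+r_n$ is itself a member of $C(X\times Y)$ whose sup-norm is bounded by $\|k_n\|_\infty+\|r_n\|_\infty$, which is why I invoke Theorem~\ref{saddle_point2} with parameter $\varepsilon/2$ so that the single tolerance $\varepsilon$ of the joint upper semicontinuity is respected. The remaining estimates are the verbatim bookkeeping from the proof of Theorem~\ref{wellposed_su}.
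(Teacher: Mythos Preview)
Your proposal is correct and is exactly what the paper intends: the paper gives no separate proof of Theorem~\ref{wellposed_z} but simply instructs the reader to ``follow the lines of the proof of Theorem~\ref{wellposed_su}'', which is precisely what you do. You also correctly isolate the one genuinely new detail---that the separable perturbation $-k_n+r_n$ produced by Theorem~\ref{saddle_point2} must be re-read as an element of $C(X\times Y)$, forcing the use of $\varepsilon/2$ so that $\|k_n\|_\infty+\|r_n\|_\infty<\varepsilon$ meets the single sup-norm tolerance coming from upper semicontinuity of $\widetilde{S}_f$.
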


\medskip

\noindent\textbf{Acknowledgements.} 
The authors express their sincere gratitude to Prof. Petar Kenderov for suggesting the idea to consider variational principles for saddle point problems in completely regular topological spaces and for his support and encouragement.

\end{document}